\theoremstyle{theorem}
\newtheorem{thm}{Theorem}[section]
\newtheorem{lem}[thm]{Lemma}
\newtheorem{prop}[thm]{Proposition}
\theoremstyle{definition}
\theoremstyle{remark}
\numberwithin{equation}{section}
\begin{document}

\title{A HILBERT SPACE ON LEFT-DEFINITE STURM-LIOUVILLE DIFFERENCE EQUATIONS}

\author{Rami AlAhmad\\
Dept. of Mathematics, Yarmouk University\\ Irbid, JORDAN -- 21163\\
e-mail: rami78uab@gmail.com}

\maketitle

\begin{abstract}

We investigate the discrete Sturm-Liouville problems
$$-\Delta(p\Delta u)(n-1)+q(n)u(n) = \l w(n) u(n),$$
where $p$ is strictly positive, $q$ is nonnegative and $w$ may change sign. If $w$ is positive, the $\ell^2$-space weighted by $w$ is a Hilbert space and it is customary to use that space for setting the problem. In the present situation the right-hand-side of the equation does not give rise to a positive-definite quadratic
form and we use instead the left-hand-side to definite such a form. We prove in this paper that this form determines a
Hilbert space (such problems are called left-definite).

{\bf AMS Subject Classification}: 39A70

{\bf Key Words and Phrases}: left-definite problems, difference
equations, Hilbert spaces

\end{abstract}

\section{Introduction} \label{S1}

Let $\textbf{N}$ be the set of natural number. Define $
S(\textbf{N})$ to be the set of all the sequences over $\textbf{N}$
which are complex valued. If $u\in S(\textbf{N})$ then define
$\triangle:S(\textbf{N})\longrightarrow S(\textbf{N})$ to be the
first forward difference operator
 given by
\[(\triangle u)(n)=u(n+1)-u(n).\]
Using this definition,
\begin{equation}~\label{PR}
(\Delta (fg))(n)=g(n+1)(\Delta f)(n)+ f(n)(\Delta
g)(n).
\end{equation} Also, using the fact $\sum_{i=j}^k (\Delta
u)(i)=u(k+1)-u(j)$, we
get the summation by parts formula:
\begin{equation}~\label{sbp}
 \sum_{n=j}^{N}g(n+1)(\Delta
f)(n) =(fg)(N+1)-(fg)(j)-\sum_{n=j}^{N}f(n)(\Delta g)(n).
\end{equation}
This equation implies
$$
\sum\limits_{n=1}^{N}(p\Delta u)(n)\overline{\Delta v(n)}
$$ \vspace*{-10pt}
\begin{equation}\label{e.g.1}
=(p\Delta u)(N)\overline{v(N+1)}-(p\Delta
u)(0)\overline{v(1)}-\sum\limits_{n=1}^{N}\Delta(p\Delta
u)(n-1)\overline{v(n)}.
\end{equation}

We associate the term  {\em left-definite problem} with an inner
product associated with the left hand side of the equation $Lu=wf$ .

The left-definite spectral problem was first raised by Weyl in his
seminal paper \cite{weyl1} and treated by him in \cite{weyl2}. There
is now a large body of literature on the problem of determining
spectral properties for such systems. We mention here for instance
Niessen and Schneider \cite{Ni}, Krall \cite{Kr1,Kr2}, Marletta and
Zettl \cite{Zet}, Littlejohn and Wellman \cite{LW}.

In this paper, we are interested in studying an inner product
determined by the left-hand-side of the difference equation
\begin{equation}~\label{schro}
-(\Delta(p\Delta u))(n-1)+q(n)u(n)=\lambda w(n)u(n); \ \ n\geq 2,
\end{equation}

 Some spectral properties were discussed in~\cite{rr} related to left-hand-side of the equation
 \begin{equation}~\label{schro2}
-(\Delta^2 u)(n-1)+q(n)u(n)=\lambda w(n)u(n); \ \ n\geq 2.
\end{equation}
 Unlike the continuous case, the equation \eqref{schro} can not be transformed to \eqref{schro2}.

Now, for the solutions $\phi$ and $\theta$ of the equation
(\ref{schro}), we define the \emph{Wronskian}, $W_{\phi,\theta}$, to
be
\[W_{\phi,\theta}(n)=p(n)(\phi(n)(\Delta \theta)(n)-(\Delta\phi)(n)\theta(n)).\]
\begin{prop}
$W_{\phi,\theta}(n)$ is constant for all $n\in \textbf{N}.$
\end{prop}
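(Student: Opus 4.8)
The plan is to prove that $W_{\phi,\theta}$ is constant by showing that its first difference vanishes, i.e. that $\Delta W_{\phi,\theta}(n-1) = W_{\phi,\theta}(n) - W_{\phi,\theta}(n-1) = 0$ for every $n$. Since $\phi$ and $\theta$ both solve \eqref{schro}, I would first record the two consequences
\[
\Delta(p\Delta\phi)(n-1) = (q(n)-\lambda w(n))\phi(n), \qquad \Delta(p\Delta\theta)(n-1) = (q(n)-\lambda w(n))\theta(n),
\]
obtained simply by isolating the difference term in the equation. These are exactly the quantities I intend to feed into a discrete Lagrange (Green) identity. It is also convenient to rewrite the Wronskian, using $p(n)(\Delta\theta)(n) = (p\Delta\theta)(n)$, as $W_{\phi,\theta}(n) = \phi(n)(p\Delta\theta)(n) - \theta(n)(p\Delta\phi)(n)$.

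The heart of the argument is the discrete Lagrange identity
\[
\phi(n)\,\Delta(p\Delta\theta)(n-1) - \theta(n)\,\Delta(p\Delta\phi)(n-1) = W_{\phi,\theta}(n) - W_{\phi,\theta}(n-1).
\]
To establish it, I would expand $\Delta(p\Delta\theta)(n-1) = (p\Delta\theta)(n) - (p\Delta\theta)(n-1)$ together with the analogous expression for $\phi$, and regroup the resulting terms into a block evaluated at $n$ and a block evaluated at $n-1$. The $n$-block reassembles into $W_{\phi,\theta}(n)$ immediately, while the $(n-1)$-block reads $\phi(n)(p\Delta\theta)(n-1) - \theta(n)(p\Delta\phi)(n-1)$. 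The one genuinely computational point is that this last expression equals $W_{\phi,\theta}(n-1)$: subtracting, the discrepancy is $p(n-1)\bigl[(\Delta\phi)(n-1)(\Delta\theta)(n-1) - (\Delta\theta)(n-1)(\Delta\phi)(n-1)\bigr]$, which is visibly zero.

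With the identity in hand, I would substitute the two consequences of \eqref{schro} into its left-hand side, obtaining $\phi(n)(q(n)-\lambda w(n))\theta(n) - \theta(n)(q(n)-\lambda w(n))\phi(n)$; the potential and spectral terms cancel by commutativity, leaving $0$. Hence $W_{\phi,\theta}(n) - W_{\phi,\theta}(n-1) = 0$ for all $n$, and so $W_{\phi,\theta}$ is constant on $\textbf{N}$.

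I expect the main obstacle to be bookkeeping the index shift: the equation \eqref{schro} anchors $\Delta(p\Delta u)$ at $n-1$ whereas the Wronskian is anchored at $n$, so one must telescope the correct terms and verify that replacing $\phi(n-1),\theta(n-1)$ by $\phi(n),\theta(n)$ in the $(n-1)$-block costs nothing. Should the telescoping prove error-prone, an equivalent route is to rewrite $W_{\phi,\theta}(n) = p(n)\bigl(\phi(n)\theta(n+1) - \phi(n+1)\theta(n)\bigr)$, expand \eqref{schro} into the three-term recurrence $p(n)u(n+1) = \bigl(p(n)+p(n-1)+q(n)-\lambda w(n)\bigr)u(n) - p(n-1)u(n-1)$, and substitute directly; the symmetric coefficient structure (the coefficient of $u(n+1)$ at step $n$ matching that of $u(n-1)$ at step $n{+}1$) then collapses $W_{\phi,\theta}(n)$ onto $W_{\phi,\theta}(n-1)$.
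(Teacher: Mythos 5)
Your proposal is correct and takes essentially the same route as the paper: both arguments show that the first difference of $W_{\phi,\theta}$ vanishes by expanding that difference (the paper via the product rule~\eqref{PR} anchored at $n$, you via a telescoped Lagrange identity anchored at $n-1$), observing that the cross terms $p(\Delta\phi)(\Delta\theta)$ cancel by symmetry, and then substituting the equation~\eqref{schro} so that the $(q-\lambda w)$ terms cancel by commutativity. The only distinction is the cosmetic bookkeeping of the index shift, which you handle correctly.
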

\begin{proof}
 Using the product
rule~(\ref{PR})
\begin{align*}
(\Delta W_{\phi,\theta})(n)&=\phi(n+1)(\Delta(p\Delta\theta))(n)
+(\Delta \phi)(n)(p\Delta\theta)(n)\\&-(\theta(n+1)(\Delta(p\Delta\phi))(n)+(p\Delta \phi)(n)(\Delta\theta)(n))
\end{align*} Using the fact that
$\phi,\theta$ are solutions for ~(\ref{schro}), then
\begin{align*}
&(\Delta W_{\phi,\theta})(n)\\&=\phi(n+1)((q-\lambda
w)\theta)(n+1)-\theta(n+1)((q-\lambda w)\phi)(n+1)=0.
\end{align*}
 Hence, the Wronskian is constant.
\end{proof}
Our main interest is studying the equation \eqref{schro} where $\lambda$ is a complex parameter and where $q$ and $w$ are sequences with $q$ is defined on $\textbf{N}_0$ and assumes non-negative real values but is not identically equal to zero, $w$ is defined on $\textbf{ N}$ and real-valued, and
$p$ is defined on $\textbf{ N}_0$ and assumes strictly positive real values

Consider the operator  on the left-hand side of \eqref{schro} by
$L$, {i.e.,}
$$(Lu)(n)=-(\Delta (pu\Delta))(n-1)+(qu)(n), \quad n\in\textbf{N}.$$
Note that $L$ operates from $\textbf{C}^{\textbf{ N}_0}$ to $\textbf{C}^{\textbf{ N}}$.

\section{Main Result}

Due to the fact that the sign of $w$ is indefinite it is not convenient to phrase the spectral and scattering theory in the usual setting of a weighted $\ell^2$-space, since it is not a Hilbert space. Instead the requirement that $q$ is non-negative but not identically equal to zero allows us to define an inner product associated with the left hand side of the equation $Lu=wf$ giving rise to the term {\em left-definite problem}. To do so define the set
$$\mathcal{H}_1=\{u\in\textbf{ C}^{\textbf{N}_0}: \sum_{n=0}^\infty (p(n)|(\Delta u)(n)|^2+q(n)|u(n)|^2)<\infty\}$$
and introduce the scalar product
$$<u,v>=\sum_{n=0}^\infty (p(n)(\Delta u)(n)\overline{(\Delta v)(n)}+q(n)u(n)\overline{v(n)}).$$
The associated norm is denoted by $\|\cdot\|$. We will also use the norm in $\ell^2(\textbf{N}_0)$ which we denote by $\|\cdot\|_2$.
 We claim $\mathcal{H}_{1}$ with this norm is a complete space. Such a result plays a role in studying the spectral properties of \eqref{schro}.

 We start with the following sequence of lemmas:

\begin{lem}
If $m\geq n$, then for $u\in S(\textbf{N})$
\begin{equation}~\label{1.1}
|u(m)|\leq |u(n)|+ (\sum_{l=1}^{\infty}p(l)|(\Delta
u)(l)|^{2})^{1/2}(\sum_{l=n}^{m-1}\frac{1}{p(l)})^{1/2}.
\end{equation}
\end{lem}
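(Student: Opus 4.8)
The plan is to bound the difference $|u(m)| - |u(n)|$ by a telescoping sum of forward differences and then apply the Cauchy-Schwarz inequality in a way that introduces the weight $p$.

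First I would write, for $m \geq n$, the telescoping identity $u(m) - u(n) = \sum_{l=n}^{m-1}(\Delta u)(l)$, which follows directly from the stated fact $\sum_{i=j}^{k}(\Delta u)(i) = u(k+1) - u(j)$. Taking absolute values and using the triangle inequality together with the reverse triangle inequality $|u(m)| - |u(n)| \leq |u(m) - u(n)|$ gives
\begin{equation*}
|u(m)| \leq |u(n)| + \sum_{l=n}^{m-1} |(\Delta u)(l)|.
\end{equation*}
Next I would insert the weight by writing each summand as $|(\Delta u)(l)| = p(l)^{1/2}|(\Delta u)(l)| \cdot p(l)^{-1/2}$, which is legitimate since $p$ is strictly positive. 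Applying the Cauchy-Schwarz inequality to the finite sum over $l = n, \dots, m-1$ then yields
\begin{equation*}
\sum_{l=n}^{m-1} |(\Delta u)(l)| \leq \Bigl(\sum_{l=n}^{m-1} p(l)|(\Delta u)(l)|^2\Bigr)^{1/2}\Bigl(\sum_{l=n}^{m-1} \frac{1}{p(l)}\Bigr)^{1/2}.
\end{equation*}

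Finally I would enlarge the first factor by extending the range of summation: since every term $p(l)|(\Delta u)(l)|^2$ is nonnegative, the finite sum from $n$ to $m-1$ is bounded above by the full sum $\sum_{l=1}^{\infty} p(l)|(\Delta u)(l)|^2$. Combining this with the previous two displays produces exactly the claimed inequality \eqref{1.1}. I do not anticipate any serious obstacle here; the argument is an elementary discrete analogue of the fundamental-theorem-of-calculus estimate, and the only points requiring a moment's care are the correct placement of the weight $p(l)^{1/2}$ inside the Cauchy-Schwarz split and the observation that nonnegativity of the summands justifies passing from the partial sum to the infinite sum in the first factor.
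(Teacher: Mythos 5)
Your proposal is correct and follows essentially the same argument as the paper: the reverse triangle inequality, the telescoping sum $u(m)-u(n)=\sum_{l=n}^{m-1}(\Delta u)(l)$, Cauchy--Schwarz with the split $\sqrt{p(l)}\,|(\Delta u)(l)|\cdot p(l)^{-1/2}$, and finally enlarging the weighted sum to the full series by nonnegativity. No gaps; this matches the paper's proof step for step.
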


\begin{proof}
\[|u(m)|-|u(n)|\leq |u(m)-u(n)|,\]
and
\[|u(m)-u(n)|=|\sum_{l=n}^{m-1}(\Delta u)(l)|\leq \sum_{l=n}^{m-1}|\Delta u(l)|.\]
Now, the inequality of Cauchy-Schwarz  gives that
\[\sum_{l=n}^{m-1}|\Delta u(l)|=
(\sum_{l=n}^{m-1}\sqrt{p}(l)|\Delta u|(l)(\frac{1}{\sqrt{p}(l)})$$
\vspace*{-10pt} $$ \leq(\sum_{l=n}^{m-1}p(l)|(\Delta
u)(l)|^{2})^{1/2}(\sum_{l=n}^{m-1}\frac{1}{p(l)})^{1/2}.\] By
combining the previous inequalities, we get:
\[|u(m)|-|u(n)|\leq (\sum_{l=n}^{m-1}p(l)|(\Delta u)(l)|^{2})^{1/2}(\sum_{l=n}^{m-1}\frac{1}{p(l)})^{1/2},\]
this inequality implies the required result since
\[(\sum_{l=n}^{m-1}p(l)|\Delta u)(l)|^{2})^{1/2}\leq
(\sum_{l=1}^{\infty}p(l)|\Delta u(l)|^{2})^{1/2}.\]
\end{proof}

\begin{lem}~\label{2}
If $r$ satisfies $\sum_{n=1}^{r}q({n})>0$, then for $1\leq n\leq
m\leq r<\infty$ and $u\in S(\textbf{N})$
\[
  |u(m)|\sum_{n=1}^{r}q({n})\leq(\sum_{n=1}^{r}q({n}))^{1/2}(\sum_{n=1}^{r}q(n)|u(n)|^{2})^{1/2}
  \] \vspace*{-10pt} \[+
C_r(\sum_{l=1}^{\infty}p(l)|\Delta
u(l)|^{2})^{1/2}\sum_{n=1}^{r}q({n}),
\]
where $C_r=(\sum_{l=1}^{r}\frac{1}{p(l)})^{1/2}$.
\end{lem}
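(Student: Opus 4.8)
The plan is to derive the estimate from Lemma 1 by summing against the weight $q$ and then applying the Cauchy--Schwarz inequality a second time. First I would observe that, for any two indices lying in $\{1,\dots,r\}$, the pointwise gap $|u(m)|-|u(n)|$ can be controlled uniformly. Applying Lemma 1 (and, for the indices $n>m$, the same telescoping-plus-Cauchy--Schwarz argument with the roles of $m$ and $n$ interchanged) gives
$$|u(m)|-|u(n)|\le |u(m)-u(n)|\le \Big(\sum_{l=1}^{\infty}p(l)|(\Delta u)(l)|^{2}\Big)^{1/2}\Big(\sum_{l}\tfrac{1}{p(l)}\Big)^{1/2},$$
where the inner sum of $1/p$ runs over the integers between $\min(m,n)$ and $\max(m,n)-1$. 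Since both $m$ and $n$ lie in $\{1,\dots,r\}$, this range is contained in $\{1,\dots,r\}$, so the last factor is at most $C_r=(\sum_{l=1}^{r}1/p(l))^{1/2}$. Hence for every $n$ with $1\le n\le r$,
$$|u(m)|\le |u(n)|+C_r\Big(\sum_{l=1}^{\infty}p(l)|(\Delta u)(l)|^{2}\Big)^{1/2}.$$

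Next I would multiply this inequality by the nonnegative weight $q(n)$ and sum over $n=1,\dots,r$. Because $q(n)\ge 0$, the inequality is preserved termwise, and the constant term produces exactly $C_r(\sum_{l=1}^{\infty}p(l)|(\Delta u)(l)|^{2})^{1/2}\sum_{n=1}^{r}q(n)$, which is the second summand on the right of the claim. The remaining piece is $\sum_{n=1}^{r}q(n)|u(n)|$, to which I would apply Cauchy--Schwarz after writing $q(n)=\sqrt{q(n)}\cdot\sqrt{q(n)}$:
$$\sum_{n=1}^{r}q(n)|u(n)|\le\Big(\sum_{n=1}^{r}q(n)\Big)^{1/2}\Big(\sum_{n=1}^{r}q(n)|u(n)|^{2}\Big)^{1/2}.$$
Combining the two bounds yields the stated inequality; the hypothesis $\sum_{n=1}^{r}q(n)>0$ is what makes the factor $(\sum_{n=1}^{r}q(n))^{1/2}$ meaningful and guarantees the left-hand side is a genuine control on $|u(m)|$.

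The only delicate point is the uniform replacement of the $1/p$-sum by $C_r$, which requires noting that the index range occurring in Lemma 1 always sits inside $\{1,\dots,r\}$ regardless of whether $n\le m$ or $n>m$; once that uniform bound is in place, the two applications of Cauchy--Schwarz are routine. I expect no genuine obstacle, only the bookkeeping of extending Lemma 1 to both orderings of the indices so that the summation over all $n\in\{1,\dots,r\}$ is legitimate.
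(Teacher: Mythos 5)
Your proof is correct and follows essentially the same route as the paper's: bound the $1/p$-sum from Lemma 1 uniformly by $C_r$, multiply the resulting pointwise inequality by $q(n)$, sum over $n=1,\dots,r$, and apply Cauchy--Schwarz to $\sum_{n=1}^{r}q(n)|u(n)|$ after writing $q(n)=\sqrt{q(n)}\cdot\sqrt{q(n)}$. In fact you are slightly more careful than the paper, which sums over all $n\in\{1,\dots,r\}$ (including $n>m$) even though Lemma 1 is stated only for $n\leq m$; your explicit treatment of both orderings of the indices fills that small gap.
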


\begin{proof}
The equation \eqref{1.1} gives that \[|u(m)|\leq |u(n)|+ C_r(\sum_{l=1}^{\infty}p(l)|(\Delta
u)(l)|^{2})^{1/2}.\]
Multiplying by $q(n)$ and taking the sum from 1 to $r$
with respect to $n$ give
\begin{equation}\label{1.2}
|u(m)|\sum_{n=1}^{r}q({n})\leq \sum_{n=1}^{r}q({n})|u(n)|+C_r(
\sum_{l=1}^{\infty}p(l)|\Delta
u(l)|^{2})^{1/2}\sum_{n=1}^{r}q({n}),
\end{equation}

Now, the inequality of Cauchy-Schwarz gives
\[\sum_{n=1}^{r}q({n})|u(n)|\leq
(\sum_{n=1}^{r}q({n}))^{1/2}(\sum_{n=1}^{r}q(n)|u(n)|^{2})^{1/2}.\]
Then~(\ref{1.2}) becomes
\[
  |u(m)|\sum_{n=1}^{r}q({n})\leq(\sum_{n=1}^{r}q({n}))^{1/2}(\sum_{n=1}^{r}q(n)|u(n)|^{2})^{1/2}
  \] \vspace*{-10pt} \[+
C_r(\sum_{l=1}^{\infty}|p(l)\Delta
u(l)|^{2})^{1/2}\sum_{n=1}^{r}q({n}).
\]
\end{proof}

We are ready to prove the following lemma:

\begin{lem}\label{2.1}
For any $N\in \textbf{N}$,  there exists $C_{N}$ such that
\[|u(m)| \leq C_{N}\|u\|_{\mathcal{H}_{1}},\]
for any $m$ such that $1\leq m \leq N$ and any
$u\in\mathcal{H}_{1}$.
\end{lem}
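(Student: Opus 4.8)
The plan is to apply Lemma~\ref{2} with a truncation parameter $r$ chosen large enough to serve two purposes at once, and then to dominate the two sums on its right-hand side by the $\mathcal{H}_1$-norm. Since $q$ is non-negative and not identically zero, I would first fix an integer $r_0$ for which $\sum_{n=1}^{r_0} q(n)>0$. Given $N\in\textbf{N}$, set $r=\max\{N,r_0\}$. Then $\sum_{n=1}^{r}q(n)\geq\sum_{n=1}^{r_0}q(n)>0$, and every $m$ with $1\leq m\leq N$ satisfies $1\leq m\leq r$, so the hypotheses of Lemma~\ref{2} are fulfilled for this $r$ and any such $m$.

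Next I would invoke Lemma~\ref{2} and divide through by the strictly positive quantity $\sum_{n=1}^r q(n)$ to obtain
\[
|u(m)| \leq \Bigl(\sum_{n=1}^r q(n)\Bigr)^{-1/2}\Bigl(\sum_{n=1}^r q(n)|u(n)|^2\Bigr)^{1/2} + C_r\Bigl(\sum_{l=1}^\infty p(l)|(\Delta u)(l)|^2\Bigr)^{1/2}.
\]
The crucial observation is that both sums appearing here are dominated by $\|u\|_{\mathcal{H}_1}^2$: because every summand in the definition of the norm is non-negative, the finite sum satisfies $\sum_{n=1}^r q(n)|u(n)|^2\leq\|u\|_{\mathcal{H}_1}^2$, and likewise $\sum_{l=1}^\infty p(l)|(\Delta u)(l)|^2\leq\|u\|_{\mathcal{H}_1}^2$. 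Substituting these bounds yields
\[
|u(m)| \leq \Bigl(\bigl(\textstyle\sum_{n=1}^r q(n)\bigr)^{-1/2}+C_r\Bigr)\|u\|_{\mathcal{H}_1},
\]
so that $C_N:=\bigl(\sum_{n=1}^r q(n)\bigr)^{-1/2}+C_r$, which depends only on $N$ through the choice of $r$ and not on $m$ or $u$, gives the claimed estimate; note that $C_r$ is finite since $p$ is strictly positive and $r$ is finite.

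There is no substantial obstacle beyond bookkeeping. The only point requiring care is the selection of $r$, which must simultaneously exceed $N$ so that all relevant indices $m$ are covered, and be large enough that the partial sum of $q$ is strictly positive so that the division is legitimate; taking $r=\max\{N,r_0\}$ discharges both requirements at once. The passage from the finite $q$-sum and the infinite $p$-sum to the single norm $\|u\|_{\mathcal{H}_1}$ is then immediate from the non-negativity of the terms defining the norm.
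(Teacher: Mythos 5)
Your proof is correct and follows essentially the same route as the paper: choose $r\geq N$ with $\sum_{n=1}^{r}q(n)>0$, apply Lemma~\ref{2}, bound both $\bigl(\sum_{n=1}^{r}q(n)|u(n)|^{2}\bigr)^{1/2}$ and $\bigl(\sum_{l=1}^{\infty}p(l)|(\Delta u)(l)|^{2}\bigr)^{1/2}$ by $\|u\|_{\mathcal{H}_{1}}$, and divide by the positive partial sum of $q$, arriving at the identical constant $C_{N}=C_{r}+\bigl(\sum_{n=1}^{r}q(n)\bigr)^{-1/2}$. Your explicit choice $r=\max\{N,r_{0}\}$ merely spells out the existence claim the paper states without detail.
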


\begin{proof}For any $N\in \textbf{N}$ there exists $r\geq N$ such
that $\sum_{n=1}^r q(n)>0$. Now, Lemma~\ref{2} implies
\[|u(m)|\sum_{n=1}^{r}q(n)\leq \|u\|_{\mathcal{H}_{1}}(\sum_{n=1}^{r}q(n)^{1/2}+
C_r\sum_{n=1}^{r}q(n)),\] or
\[|u(m)|\leq
C_{N}\|u\|_{\mathcal{H}_{1}},\] where
\[C_{N}=C_r+(\sum_{n=1}^{r}q(n))^{-1/2}.\]
\end{proof}

The following lemma gives some properties for the Cauchy sequences
in $\mathcal{H}_1$.

\begin{lem}\label{cauchy} Let $n\mapsto u_{n}(\cdot)$ be a Cauchy sequence in
$\mathcal{H}_{1}$, then
\begin{enumerate}
\item There exists $v(\cdot)$ such that $(\sqrt{p}\Delta
u_{n})(\cdot)\longrightarrow v(\cdot)$ in $l^2(\textbf{N})$ as
$n\longrightarrow\infty$.
\item $\sqrt{q(\cdot)}u_n(\cdot)\longrightarrow \sqrt{q(\cdot)}u(\cdot)$ in $l^2(\textbf{N})$,
where $u(k)=\lim_{n\longrightarrow\infty}u_{n}(k)$ in $\textbf{C}$
for all $k\in\textbf{N}$.
\end{enumerate}
\end{lem}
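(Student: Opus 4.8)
The plan is to exploit the fact that the square of the $\mathcal{H}_1$-norm splits as a sum of two $l^2$-pieces,
$\|u\|_{\mathcal{H}_1}^2=\sum_{n}p(n)|(\Delta u)(n)|^2+\sum_{n}q(n)|u(n)|^2$,
each of which is individually dominated by $\|u\|_{\mathcal{H}_1}^2$. Since $\mathcal{H}_1$ is a vector space, the differences $u_n-u_m$ again lie in $\mathcal{H}_1$, so both $(\sqrt{p}\,\Delta u_n)$ and $(\sqrt{q}\,u_n)$ will inherit the Cauchy property from $(u_n)$ and will then converge in the complete space $l^2(\textbf{N})$.

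For part (1) I would note that
$\|\sqrt{p}\,\Delta u_n-\sqrt{p}\,\Delta u_m\|_2^2=\sum_l p(l)\,|(\Delta(u_n-u_m))(l)|^2\leq\|u_n-u_m\|_{\mathcal{H}_1}^2\longrightarrow 0$,
so $(\sqrt{p}\,\Delta u_n)$ is Cauchy in $l^2(\textbf{N})$; completeness of $l^2(\textbf{N})$ then produces the desired limit $v$, and that is all part (1) requires.

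For part (2) the first task is to manufacture the pointwise limit. Here I would apply Lemma~\ref{2.1} to the difference $u_n-u_m\in\mathcal{H}_1$: for a fixed index $k$ and any $N\geq k$ one gets $|u_n(k)-u_m(k)|\leq C_N\,\|u_n-u_m\|_{\mathcal{H}_1}$, so $(u_n(k))_n$ is Cauchy in $\textbf{C}$ and converges to a number I call $u(k)$, defining $u\in\textbf{C}^{\textbf{N}}$. Next, exactly as in part (1), $(\sqrt{q}\,u_n)$ is Cauchy in $l^2(\textbf{N})$ and converges there to some $w$. It then remains to identify $w$ with $\sqrt{q}\,u$, and this is the one step where the two modes of convergence must be reconciled: convergence in $l^2(\textbf{N})$ forces coordinatewise convergence $\sqrt{q(k)}\,u_n(k)\to w(k)$ for each $k$, while the pointwise statement just established gives $\sqrt{q(k)}\,u_n(k)\to\sqrt{q(k)}\,u(k)$. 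Uniqueness of limits in $\textbf{C}$ then yields $w(k)=\sqrt{q(k)}\,u(k)$ for every $k$, i.e. $\sqrt{q}\,u_n\to\sqrt{q}\,u$ in $l^2(\textbf{N})$.

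The only genuinely substantive step is the existence of the pointwise limit $u(k)$, and this is precisely what Lemma~\ref{2.1} is designed to supply: it converts $\mathcal{H}_1$-smallness into pointwise smallness on every finite initial segment. Everything else is the routine observation that each summand of a dominating norm is itself Cauchy, together with the elementary fact that $l^2$-convergence implies convergence of each coordinate. I therefore expect no serious obstacle beyond being careful that the limit $u$ produced pointwise is the \emph{same} object whose $\sqrt{q}$-weighting is the $l^2$-limit $w$.
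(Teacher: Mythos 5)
Your proposal is correct and follows essentially the same route as the paper: split the $\mathcal{H}_1$-norm into its two $l^2$ summands to get Cauchyness of $\sqrt{p}\,\Delta u_n$ and $\sqrt{q}\,u_n$ in $l^2(\textbf{N})$, use Lemma~\ref{2.1} to obtain the pointwise limit $u$, and identify the $l^2$-limit of $\sqrt{q}\,u_n$ with $\sqrt{q}\,u$ via coordinatewise convergence and uniqueness of limits in $\textbf{C}$. This matches the paper's proof step for step, including the one subtlety you flag (reconciling the $l^2$-limit with the pointwise limit).
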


\begin{proof}
\begin{enumerate}
\item If $n\mapsto u_{n}(\cdot)$
is a Cauchy sequence in $\mathcal{H}_{1}$, then for $\varepsilon>0$, there
exists $n_{0}$ such that for all $m,n\geq n_{0}$
\begin{equation}\label{conv.}
\|u_{m}(\cdot)-u_{n}(\cdot)\|_{\mathcal{H}_{1}}<\varepsilon
\end{equation}
consequently,
\[\|(\sqrt{p}\Delta u_{m})(\cdot)-(\sqrt{p}\Delta u_{n})(\cdot)\|_{l^{2}(\textbf{N})}< \varepsilon,\]
this means by the completeness of $l^{2}(\textbf{N})$ that there
exists $v(\cdot)$ such that, as $n\longrightarrow \infty$,
\begin{equation}~\label{l^2}
(\sqrt{p}\Delta u_{n})(\cdot)\longrightarrow v(\cdot) \text { in }
l^{2}(\textbf{N}).
\end{equation}
Therefore,
\begin{equation}~\label{l2}
(\sqrt{p}\Delta u_{n})(k)\longrightarrow v(k) \text { in } \textbf{C}.
\end{equation}
\item
Lemma~\ref{2.1} gives $K_r$ such that if $k\leq r$
\[|u_{m}(k)-u_{n}(k)|\leq K_r\|u_{m}(\cdot)-u_{n}(\cdot)\|_{\mathcal{H}_{1}}< K_r\varepsilon,\]
this means that $n\mapsto u_{n}(k)$ is a Cauchy sequence in
$\textbf{C}$. The completeness of the complex numbers $\textbf{C}$
gives the existence of $u\in S(\textbf{N})$ such that, as
$n\longrightarrow \infty$,
\begin{equation}~\label{c}
u_{n}(k)\longrightarrow u(k) \text { in } \textbf{C}
\end{equation}
and hence
\begin{equation}~\label{C}
\sqrt{q(k)}u_{n}(k)\longrightarrow \sqrt{q(k)}u(k) \text{ in\textbf{C}} .
\end{equation}
Moreover, equation~(\ref{conv.}) gives
\[\|\sqrt{q(\cdot)}u_{m}(\cdot)-\sqrt{q(\cdot)}u_{n}(\cdot)\|_{l^{2}(\textbf{N})}< \varepsilon.\]
Again by the completeness of $l^{2}(\textbf{N})$ then there exists
$\nu(\cdot)$ such that as $n\longrightarrow \infty$,
$\sqrt{q(\cdot)}u_{n}(\cdot)\longrightarrow \nu(\cdot)$ in
$l^{2}(\textbf{N})$ this means $\sum_{k=1}^\infty
|\sqrt{q(k)}u_{n}(k)- \nu(k)|^2\longrightarrow 0$. Hence, for any $k$,
\[\sqrt{q(k)}u_{n}(k)\longrightarrow \nu(k)\text{ in } \textbf{C},\]
which implies by ~(\ref{C}) $\nu(k)=\sqrt{q(k)}u(k)$.
\end{enumerate}
\end{proof}

\begin{prop}
The space $\mathcal{H}_{1}$ is complete.
\end{prop}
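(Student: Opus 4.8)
The plan is to take an arbitrary Cauchy sequence $n\mapsto u_n(\cdot)$ in $\mathcal{H}_1$ and to exhibit a limit $u\in\mathcal{H}_1$ to which it converges in $\|\cdot\|$; the two ingredients are handed to us by Lemma~\ref{cauchy}. From part~(1) I obtain $v\in\ell^2(\textbf{N})$ with $(\sqrt{p}\Delta u_n)(\cdot)\to v(\cdot)$, and from part~(2) the pointwise limit $u(k)=\lim_n u_n(k)$ (equation~(\ref{c})) together with $\sqrt{q}u_n\to\sqrt{q}u$ in $\ell^2(\textbf{N})$. The candidate limit is precisely this $u$, so the whole argument reduces to identifying the abstract $\ell^2$-limit $v$ with $\sqrt{p}\Delta u$ and then reading off the two pieces of the norm.

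The identification is the crux. Since $u_n(k)\to u(k)$ in $\textbf{C}$ for each fixed $k$, I get $(\Delta u_n)(k)=u_n(k+1)-u_n(k)\to u(k+1)-u(k)=(\Delta u)(k)$, hence $\sqrt{p(k)}(\Delta u_n)(k)\to\sqrt{p(k)}(\Delta u)(k)$ in $\textbf{C}$ for every $k$. On the other hand, convergence in $\ell^2(\textbf{N})$ forces coordinatewise convergence, which is exactly equation~(\ref{l2}): $\sqrt{p(k)}(\Delta u_n)(k)\to v(k)$. Uniqueness of limits in $\textbf{C}$ then yields $v(k)=\sqrt{p(k)}(\Delta u)(k)$ for all $k$, i.e. $v=\sqrt{p}\Delta u$. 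With this in hand, $\sum_n p(n)|(\Delta u)(n)|^2=\|v\|_{\ell^2(\textbf{N})}^2<\infty$ and $\sum_n q(n)|u(n)|^2=\|\sqrt{q}u\|_{\ell^2(\textbf{N})}^2<\infty$, so $\|u\|^2<\infty$ and therefore $u\in\mathcal{H}_1$.

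Convergence in norm is then immediate: splitting the norm into its two defining sums gives $\|u_n-u\|^2=\|\sqrt{p}\Delta u_n-\sqrt{p}\Delta u\|_2^2+\|\sqrt{q}u_n-\sqrt{q}u\|_2^2=\|\sqrt{p}\Delta u_n-v\|_2^2+\|\sqrt{q}u_n-\sqrt{q}u\|_2^2$, and both terms tend to $0$ by parts~(1) and~(2) of Lemma~\ref{cauchy}. Hence $u_n\to u$ in $\mathcal{H}_1$ and the space is complete. The main obstacle is exactly the identification step of the second paragraph: the limit $v$ produced by part~(1) arises only from completeness of $\ell^2$ and a priori bears no relation to the pointwise limit $u$, so one must bridge them through coordinatewise convergence, which in turn relies on the pointwise bound secured by Lemma~\ref{2.1}. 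The only remaining care is bookkeeping: the sums defining $\mathcal{H}_1$ start at $n=0$ while the auxiliary lemmas are phrased over $\textbf{N}$, so one should keep the $n=0$ terms in play consistently throughout.
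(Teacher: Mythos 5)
Your proof is correct and takes essentially the same route as the paper's: both arguments rest on Lemma~\ref{cauchy}, identify the $\ell^2$-limit $v$ with $\sqrt{p}\,\Delta u$, conclude $u\in\mathcal{H}_1$, and finish by splitting $\|u_n-u\|^2$ into its two $\ell^2$ pieces, each of which tends to zero by parts (1) and (2) of that lemma. If anything, your identification step (uniqueness of coordinatewise limits in $\textbf{C}$) is spelled out more carefully than in the paper, which asserts it only implicitly through the telescoping formula $u(k)=u(1)+\sum_{j=1}^{k-1}v(j)$.
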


\begin{proof}
First, assume $n\mapsto u_n(\cdot)$ is a Cauchy sequence. Then using
Lemma~\ref{cauchy} there exist $u\in S(\textbf{N}) $ such that $u_n$
converges to $u$ pointwise  and $v(\cdot)\in l^2(\textbf{N})$ such
that $(\Delta u_n)(\cdot)\longrightarrow v(\cdot)$. This proves that
$u(k)=u(1)+\sum_{j=1}^{k-1}v(j)\in\mathcal{H}_1$. Also, this lemma
implies $(\sqrt{p}\Delta u_n)(\cdot)\longrightarrow (\sqrt{p}\Delta u)(\cdot)$
and $(\sqrt{q}u_n)(\cdot)\longrightarrow (\sqrt{q}u)(\cdot)$ in
$l^2(\textbf{N})$.

Moreover, one can prove that $u_{n}(\cdot)\longrightarrow u(\cdot)$
in $\mathcal{H}_1$ as follows. Since
\[\|u_{n}(\cdot)-u(\cdot)\|_{\mathcal{H}_{1}}=\sum_{k=1}^{\infty}p(k)|(\Delta u_{n})(k)-(\Delta u)(k)|^{2}
+\sum_{k=1}^{\infty}q(k)|u_{n}(k)-u(k)|^{2}, \] then
\[\|u_{n}(\cdot)-u(\cdot)\|_{\mathcal{H}_{1}}=\sum_{k=1}^{\infty}|(\sqrt{p(k)}(\Delta u_{n})(k)-v(k))|^{2}
\] \vspace*{-10pt}
\[ +\sum_{k=1}^{\infty}|(\sqrt{q(k)}(u_{n}(k)-u(k)))^{2}|.\] Using
Lemma\ref{cauchy} and the last equation, we get
$\|u_{n}(\cdot)-u(\cdot)\|_{\mathcal{H}_{1}}\longrightarrow 0$,
which means $u_{n}(\cdot)\longrightarrow u(\cdot)$ in
$\mathcal{H}_{1}$, and hence the Cauchy sequence in
$\mathcal{H}_{1}$ is convergent, this means $\mathcal{H}_{1}$ is
complete.
\end{proof}


\begin{thebibliography}{99} 

\bibitem{rr}
R. AlAhmad and R. Weikard, On inverse problems for left-definite
discrete Sturm-Liouville equations,  \emph{Operators and Matrices},
 \textbf{7}, No 1 (2013), 35-70.
 
 \bibitem{GU}
G.\v{S}. Guse\u{i}nov, Determination of an infinite nonselfadjoint
Jacobi matrix from its generalized spectral function,  \emph{Mat.
Zametki}, \textbf{23}, No 2 (1978), 237-248.

\bibitem{Kr1} A.M. Krall, Regular left definite boundary value problems of even order.
 \emph{Quaestiones Math.}, \textbf{15}, No 1 (1992), 105-118.
\bibitem{Kr2} A.M. Krall, Singular left-definite boundary value problems.
   \emph{Indian J. Pure Appl. Math.}, \textbf{29}, No 1 (1998), 29-36.
 \bibitem{LW}
L.L. Littlejohn and R. Wellman.
 A general left-definite theory for certain self-adjoint operators
with applications to differential equations, \emph{J. Differential
Equations}, \textbf{181}, No 2 (2002), 280-339.

 
 \bibitem{Zet} M. Marletta and A. Zettl,
 Counting and computing eigenvalues of left-definite Sturm-
Liouville problems, \emph{J. Comput. Appl. Math.}, \textbf{148}, No
1 (2002), 65-75 (Special Issue on the Occasion of the 65th Birthday
of Professor Michael Eastham).
\bibitem{Ni}
H.D. Niessen and A. Schneider, Spectral theory for left-definite
singular systems of differential equations, In: \emph{Spectral
Theory and Asymptotics of Differential Equations (Proc. Conf.
Scheveningen, 1973)}, {North-Holland Math. Studies}, \# \textbf{
13}. North-Holland, Amsterdam (1974), 29-43.

\bibitem{RUDI}
R. Weikard, A local Borg-Marchenko theorem for difference equations with complex coefficients,
 In: \emph{Partial Differential Equations and Inverse Problems},  \# \textbf{362} of Contemp. Math. Ser.,
 Amer. Math. Soc., Providence -- RI (2004), 403-410.
 \bibitem{weyl2}
 H. Weyl,
Uber gew\"{o}hnliche lineare Differentialgleichungen mit
singul\"{a}ren Stellen und ihre Eigenfunktionen (2. note),
\emph{G\"{o}tt. Nachr.} (1910), 442-467.

\bibitem{weyl1}
H. Weyl, Uber gew\"{o}hnliche Differentialgleichungen mit
Singularit\"{a}ten und die zugeh\"{o}rigen Entwicklungen
willk\"{u}rlicher Funktionen, \emph{Math. Ann.}, \textbf{68} (1910),
220-269.







\end{thebibliography}
\end{document}